\documentclass{article}
\usepackage[top=2.5cm,bottom=2.5cm,left=2.5cm,right=2.5cm]{geometry}
\usepackage[english]{babel}

\usepackage{graphicx}
\usepackage{xcolor}
\usepackage{hyperref}
\hypersetup{
     colorlinks = true,
     citecolor  = blue,
     linkcolor = blue,
     urlcolor = blue
}
\usepackage{xcolor}
\usepackage[fleqn]{amsmath}
\usepackage{amssymb,amsthm}
\newtheorem{theorem}{Theorem}[section]

\usepackage{cite}
\usepackage{subfig}
\usepackage{float}
\usepackage{authblk}
\usepackage[ruled,linesnumbered,noresetcount,vlined]{algorithm2e}

\SetKwFor{Repeat}{repeat}{:}{endw}
\SetKwFor{Forall}{forall}{}{endw}

\usepackage{amsmath}
\DeclareMathOperator*{\argmin}{argmin}

\usepackage{tikz}
\usetikzlibrary{arrows, arrows.meta,automata,chains,fit,positioning,shapes}

\title{Column Generation \\
for Real-Time Ride-Sharing Operations}
\author[1]{\mbox{Connor Riley}}
\author[2]{\mbox{Antoine Legrain}}
\author[1]{\mbox{Pascal Van Hentenryck}}
\affil[1]{\mbox{Georgia Institute of Technology}, Atlanta GA, USA}
\affil[2]{Polytechnique Montréal, Montreal QC H3T 1J4, Canada}
\date{}

\usepackage{enumitem}
\newcommand{\name}{{\sc RTDARS}}

\begin{document}
\graphicspath{{images/}}

\maketitle

\begin{abstract}
    This paper considers real-time dispatching for large-scale
    ride-sharing services over a rolling horizon. It presents \name{}
    which relies on a column-generation algorithm to minimize wait
    times while guaranteeing short travel times and service for each
    customer.  Experiments using historic taxi trips in New York City
    for instances with up to 30,000 requests per hour indicate that
    the algorithm scales well and provides a principled and effective
    way to support large-scale ride-sharing services in dense cities.

\end{abstract}
\section{Introduction}

In the past decade, commercial ride-hailing services such as Didi,
Uber, and Lyft have decreased reliance on personal vehicles and
provided new mobility options for various population segments. More recently, 
ride-sharing has been introduced as an option for customers
using these services. Ride-sharing has the potential for significant
positive impact since it can reduce the number of cars on the roads
and thus congestion, decrease greenhouse emissions, and make mobility
accessible to new population segments by decreasing trip prices.
However, the algorithms used by commercial ride-sharing services
rarely use state-of-the-art techniques, which reduces the potential
positive impact. Recent research by Alonso-Mora et
al. \cite{Alonso-Mora462} has shown the benefits of more sophisticated
algorithms. Their algorithm uses shareability
graphs and cliques to generate all possible routes and a MIP model to
select the routes. They impose significant constraints on waiting
times (e.g., 420 seconds), which reduces the potential riders to
consider for each route at the cost of rejecting customers.

This paper considers large-scale ride-sharing services where {\em
  customers are always guaranteed a ride}, in contrast to prior
work. The Real-Time Dial-A-Ride System (\name{}) divides the days into
short time periods called epochs, batches requests in a given epoch,
and then schedules customers to minimize average waiting
times. \name{} makes a number of modeling and solving contributions. At
the modeling level, \name{} has the following innovations:

\begin{enumerate}[wide, labelindent=5pt]

\item \name{} follows a Lagrangian approach, relaxing the constraint
  that all customers must be served in the static optimization problem
  of each epoch. Instead, \name{} associates a penalty with each rider,
  representing the cost of not serving the customer.

\item To balance the minimization of average waiting times and ensure
  that the waiting time of every customer is reasonable, \name{}
  increases the penalty of an unserved customer in the next epoch,
  making it increasingly harder not to serve the waiting rider.

\item \name{} exploits a key property of the resulting formulation to
  reduce the search space explored for each epoch.

\item To favor ride-sharing, \name{} uses the concept of virtual stops
  used in the RITMO project \cite{RITMO} and being adopted by
  ride-sourcing services.
\end{enumerate}
\name{} solves the static optimization problem for each epoch with a
column-generation algorithm based on the three-index MIP formulation
\cite{Cordeau2007}. The main innovation here is the pricing problem
which is organized as a series of waves, first considering all the
insertions of a single customer, before incrementally adding more
customers.

\name{} was evaluated on historic taxi trips from the New York City
Taxi and Limousine Commission \cite{nycdata}, which contains
large-scale instances with more than 30,000 requests an hour. The
results show that \name{} can provide service guarantees while
improving the state-of-the-art results. For instance, for a fleet of
2,000 vehicles of capacity 4, \name{} obtains an average wait of 2.2
minutes and an average deviation from the shortest path of 0.62
minutes. The results also show that large-occupancy vehicles (e.g.,
8-passenger vehicles) provide additional benefits in terms of waiting
times with negligible increases in in-vehicle time. \name{} is also
shown to generate a small fraction of the potential columns,
explaining its efficiency. The Lagrangian modeling also helps in
reducing computation times significantly.

The rest of this paper is organized as follows. Section
\ref{section-related} presents the related work in more
detail. Section \ref{section-online} describes the real-time
setting. Section \ref{section-static} specifies the static problem and
gives the MIP formulation. Section \ref{section-cg} describes the
column generation. Section \ref{section-rt} specifies the real-time
operations. Section \ref{section-results} presents the experimental
results and Section \ref{section-conclusion} concludes the paper.

\section{Related Work}
\label{section-related}

Dial-a-ride problems have been a popular topic in operations research
for a long time. Cordeau and Laporte \cite{Cordeau2007} provided a
comprehensive review of many of the popular formulations and the
starting point of \name{}'s column generation is their three-index
formulation. Constraint programming and large neighborhood search were
also proposed for dial-a-ride problems (e.g., \cite{Jain2011}
\cite{Berbeglia2012}).  Progress in communication technologies and the
emergence of ride-sourcing and ride-sharing services have stimulated
further research in this area. Rolling horizons are often used to
batch requests and were used in taxi pooling previously \cite{stars,
  scalable-taxi}. In addition, stochastic scenarios along with waiting
and reallocation strategies have been previously explored in
\cite{Bent2007,scenariopvh}. Bertsimas, Jaillet, and Martin
\cite{Bertsimas2018OnlineVR} explored the taxi routing problem
(without ride-sharing) and introduced a ``backbone'' algorithm which
increases the sparsity of the problem by computing a set of candidate
paths that are likely to be optimal. Alonso-Mora et al. proposed an
anytime algorithm which uses cliques to generate vehicle paths
combined with a vehicle rebalancing step to move vehicles towards
demand \cite{Alonso-Mora462}. Their ``results show that 2,000 vehicles
(15\% of the taxi fleet) of capacity 10 or 3,000 of capacity 4 can
serve 98\% of the demand within a mean waiting time of 2.8 min and
mean trip delay of 3.5 min.''  \cite{Alonso-Mora462}.  Both
\cite{Alonso-Mora462} and \cite{Bertsimas2018OnlineVR} use hard time
windows to reject riders when they cannot serve them quickly enough
(e.g., 420 seconds in the aforementioned results). This decision
significantly reduces the search space as only close riders can be
served by a vehicle. In contrast, \name{} provides service guarantees
for all riders, while still reducing the search space through a
Lagrangian reformulation. The results show that \name{} is capable of
providing these guarantees while improving prior results in terms of
average waiting times. Indeed, for 2,000 vehicles of capacity 4,
\name{} provides an average waiting time of 2.2 minutes with a
standard deviation of 1.24 and a mean trip deviation of 0.62 minutes
(standard deviation 1.13). For 3,000 vehicles of capacity 4, the
average waiting time is further reduced to 1.81 minutes with a
standard deviation of 1.03 and an average trip deviation of 0.23
minutes.

\section{Overview of the Approach}
\label{section-online}

\name{} divides time into epochs, e.g., time periods of 30
seconds. During an epoch, \name{} performs two tasks: It batches
incoming requests and it solves the epoch optimization problem for all
unserved customers from prior epochs. The epoch optimization takes, as
inputs, these unserved customers and their penalties, as well as the
{\em first} stop of each vehicle after the start of the epoch: Vehicle
schedules prior to this stop are committed since, for safety reasons,
\name{} does not allow a vehicle to be re-routed once it has departed
for its next customer. These first stops are called {\em departing
  stops} in this paper. All customers served before and up to the
departing stops of the vehicles are considered served. All others,
even if they were assigned a vehicle in the prior epoch optimization,
are considered unserved.

Once the epoch is completed, a new schedule and a new set of requests
are available. The schedule commits the vehicle routes for the
entire next epoch and determines their next departing stops. The
customer penalties are also updated to make it increasingly harder not
to serve them. \name{} then moves to the next epoch.

\section{The Static Problem}
\label{section-static}

This section defines and presents the static (generalized) dial-a-ride
problem solved for each epoch. its objective is to schedule a
set of requests on a given set of vehicles while ensuring that no
customer deviates too much from their shortest trip time.

The inputs consist primarily of the vehicle and request data.  The set
of vehicles is denoted by $V$ and each vehicle $v \in V$ is associated
with a tuple $(u^v_0, w^v_0, I_v, T_v^B, T_v^E, Q_v)$, where $u^v_0$
is the time the vehicle arrives at its {\em departing stop} for the
epoch, $w^v_0$ is the number of passengers currently in the vehicle,
$I_v$ is the set of dropoff requests for on-board passengers, $T^B_v$
is the vehicle start time, $T^E_v$ is the vehicle end time, and $Q_v$
is the capacity of the vehicle. In other words, a vehicle $v$ can only
insert new requests after time $u^v_0$ and it must serve the dropoffs
in $I_v$. The request data is given in terms of a complete graph
$\mathcal{G} = (\mathcal{N}, \mathcal{A})$, which contains the nodes
for each possible pickup and delivery. There are five types of nodes:
the pickup nodes $P = \{1, \dots n\}$, their associated dropoff nodes
$D = \{n+1, \dots 2n\}$, the dropoff nodes $I = \cup_{v \in V} I_v$ of
the passengers inside the vehicles, the source~$0$, and the sink~$s$
(the last node in terms of indices). Each node $i$ is associated with
a number of people~$q_i$ to pick up ($q_i > 0$) or drop off ($q_i <
0$) and the time $\Delta_i \geq 0$ it takes to perform them. If $i \in
P$, then the corresponding delivery node is $n + i$ and $q_i = -
q_{n+i}$.  Also, $q_i$ and $\Delta_i$ are zero for the source and the
sink. Each node $i \in P$ is associated with a request, which is a
tuple of the form $(e_i,o_i,d_i,q_i)$ where $e_i$ is the earliest
possible pickup time, $o_i$ is the pickup location, $d_i$ is the
dropoff location, and $q_i$ is the number of passengers. Every
request~$i$ in $I$ is associated with the time~$u^P_i$ on which the
request was picked up.  Every request~$i \in P \cup I$ is associated
with the shortest time~$t_i$ from the request origin to its
destination.  Finally, the input contains a matrix $(t_{i,j})_{(i,j)
  \in \mathcal{A}}$ of travel times from any node $i$ to any node $j$
satisfying the triangle inequality, the constants~$\alpha$ and $\beta$
which constrain the deviation from the shortest path, and the
penalty~$p_i$ of not serving the request~$i \in P$.

\begin{figure}[!t]
\begin{subequations} \label{model:static}
\begin{align}
\min \;\;\;\;\;	&   \quad \sum_{i \in P} \sum_{v \in V} (u^v_i - e_i) +  \sum_{i \in P} p_i z_i \label{model:static_obj} \\
\intertext{subject to} 
	&  \left (\sum_{v \in V} \sum_{j \in \mathcal{N}} x^v_{ij} \right ) + z_i = 1 & \forall i \in P \label{model:static_constr:allserved} \\ 
	& \sum_{j \in \mathcal{N}} x^v_{ij} = \sum_{j \in \mathcal{N}} x^v_{ji} & \forall i \in \mathcal{N}\setminus\{0,s\}, \forall v \in V \label{model:static_constr:flow} \\
	& \sum_{j \in \mathcal{N}} x^v_{0j} = 1 & \forall v \in V \label{model:static_constr:source} \\
	& \sum_{j \in \mathcal{N}} x^v_{j,s} = 1 & \forall v \in V \label{model:static_constr:sink} \\
	& \sum_{j \in \mathcal{N}} x^v_{ij} - \sum_{j \in \mathcal{N}}x^v_{n+i, j} = 0 & \forall i \in P, \forall v \in V \label{model:static_constr:droppick} \\
	& \sum_{i \in \mathcal{N}} x^v_{ij} = 1 & \forall j \in I_v, \forall v \in V \label{model:static_constr:dropoffs} \\
    & u^v_j \geq (u^v_i + \Delta_i + t_{ij})x^v_{ij} & \forall i,j \in \mathcal{N}, \forall v \in V \label{model:static_constr:vstart} \\
    & u^v_0 \geq T^B_v & \forall v \in V \label{model:static_constr:voperation1} \\
    & u^v_s \leq T^E_v & \forall v \in V \label{model:static_constr:voperation2} \\
    & u^v_i \geq e_i & \forall i \in P, v \in V \label{model:static_constr:bounds} \\
    & t_i \leq u^v_{n+i} - (u^v_i + \Delta_i) \leq \max\{ \alpha t_i, \beta + t_i\} & \forall i \in P, \forall v \in V \label{model:static_constr:traveltime} \\
    & t_{i} \leq u^v_{i} - (u^P_i + \Delta_i) \leq \max\{ \alpha t_{i}, \beta + t_{i}\} & \forall i \in I_v, \forall v \in V \label{model:static_constr:traveltime_passengers} \\
    & w^v_j \geq (w^v_i + q_j)x^v_{ij} & \forall i,j \in \mathcal{N}, \forall v \in V \label{model:static_constr:calccapacity} \\
    & 0 \leq w^v_i \leq Q_v & \forall i \in \mathcal{N}, \forall v \in V \label{model:static_constr:capcity} \\
    & x^v_{ij} \in \{0, 1\} & \forall i,j \in \mathcal{N}, \forall v \in V \label{model:static_constr:domain}
\end{align}
\end{subequations}
\caption{The Static Formulation of the Dial-A-Ride Problem.} \label{fig:static}
\end{figure}

A MIP model for the static problem is presented in
Figure~\ref{fig:static}. The MIP variables are as follows: $u^v_i$
represents the time at which vehicle $v$ arrives at node $i$, $w^v_i$
the number of people in vehicle $v$ when $v$ leaves node $i$,
$x^v_{ij}$ denotes whether edge $(i,j)$ is used by vehicle $v$, and
$z_i$ captures whether request $i \in P$ is served. Objective
\eqref{model:static_obj} balances the minimization of wait times for
every pickups with the penalties incurred by unserved riders.  Note
that the wait times for riders in $I$ are not included in the
objective because these riders are already in vehicles: only the
constraints on their deviations must be satisfied.
Constraints~\eqref{model:static_constr:allserved} ensure that only one
vehicle serves each request and that, if the request is not served,
$z_i$ is set to 1 to activate the penalty in the objective.
Constraints~\eqref{model:static_constr:flow} are flow balance
constraints.  Constraints~\eqref{model:static_constr:source} and
\eqref{model:static_constr:sink} are flow constraints for the source
and the sink. Constraints~\eqref{model:static_constr:droppick} ensure
that every request is dropped off by the same vehicle that picks it
up. Constraints~\eqref{model:static_constr:dropoffs} ensure that every
passenger currently in a vehicle is dropped off.
Constraints~\eqref{model:static_constr:vstart} define the arrival
times at the
nodes. Constraints~\eqref{model:static_constr:voperation1} and
\eqref{model:static_constr:voperation2} ensure that the vehicle is
operational during its working hours.
Constraints~\eqref{model:static_constr:bounds} ensure that each rider
is picked up no earlier than its lower bound.
Constraints~\eqref{model:static_constr:traveltime} ensure that the
travel time of each served passenger does not deviate too much from
the shortest path between its origin and destination.  Passengers are
allowed to spend either $\alpha * t_i$ (a percentage of the shortest
path), or $\beta + t_i$ (a constant deviation time from the shortest
path) traveling in the vehicle, whichever is larger.
Constraints~\eqref{model:static_constr:traveltime_passengers} do the
same for passengers already in a
vehicle. Constraints~\eqref{model:static_constr:calccapacity} define
the vehicle capacities. Lastly,
constraints~\eqref{model:static_constr:capcity} ensure that the
vehicle capacities are not exceeded.
Constraints~\eqref{model:static_constr:vstart} and
\eqref{model:static_constr:calccapacity} can be linearized using a Big
$M$ formulation.

The following theorem provides a way to prune the search space
significantly. It shows that, in an optimal solution, a rider cannot
be picked up by a vehicle $v$ if the smallest possible wait time
incurred using $v$ is greater than her penalty.

\begin{theorem}
  A feasible solution where rider $l$ is assigned to vehicle $v$ such
  that $u^{v}_0 + t_{0,l} - e_l > p_l$ is suboptimal. \label{thm:1}
\end{theorem}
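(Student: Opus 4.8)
\noindent\emph{Proof strategy.} The plan is to take an arbitrary feasible solution $S$ in which rider $l$ is served by vehicle $v$ with $u^v_0 + t_{0,l} - e_l > p_l$, and to exhibit a feasible solution $S'$ of strictly smaller objective, obtained simply by dropping $l$ from $v$'s route and paying her penalty instead.

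First I would establish the bound $u^v_l \ge u^v_0 + t_{0,l}$ in $S$. As is standard, the timing constraints~\eqref{model:static_constr:vstart} preclude subtours, so vehicle $v$'s arcs form a simple path from the source $0$ to the sink $s$; this path visits $l$ because $l$ is served by $v$, and visits $n+l$ by~\eqref{model:static_constr:droppick}. Chaining~\eqref{model:static_constr:vstart} along the arcs of the path from $0$ to $l$, then discarding the nonnegative service times and collapsing the consecutive travel times with the triangle inequality, yields $u^v_l \ge u^v_0 + t_{0,l}$. Since $u^{v'}_l \ge e_l$ for every vehicle $v'$ by~\eqref{model:static_constr:bounds}, rider $l$'s contribution $\sum_{v'\in V}(u^{v'}_l - e_l)$ to the objective~\eqref{model:static_obj} is at least $u^v_0 + t_{0,l} - e_l > p_l$.

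Next I would build $S'$ from $S$ by: (i) deleting the nodes $l$ and $n+l$ from $v$'s route and reconnecting it with the direct arcs (``shortcutting''), leaving all other arc variables unchanged and keeping $u^v_j$ unchanged for the surviving nodes; (ii) recomputing $w^v$ as the true onboard occupancy along the shortened route; (iii) setting $z_l = 1$ and $u^{v'}_l = e_l$, $u^{v'}_{n+l} = e_l + \Delta_l + t_l$ for every $v' \in V$; and (iv) leaving everything else untouched. The feasibility check for $S'$ is the technical core. Flow balance and the coupling constraints~\eqref{model:static_constr:flow}--\eqref{model:static_constr:dropoffs} survive because the shortcut of a simple path is a simple path on the same node set minus $\{l,n+l\}$. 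For~\eqref{model:static_constr:vstart}, any pair of nodes $a,b$ that become consecutive in $S'$ was separated in $S$ only by $l$ and/or $n+l$, so composing the old inequalities through those intermediate nodes and applying the triangle inequality gives $u^v_b \ge u^v_a + \Delta_a + t_{a,b}$ for the unchanged times. Constraints~\eqref{model:static_constr:voperation1}--\eqref{model:static_constr:bounds} hold because no surviving time was increased and $u^v_s$ is unchanged; \eqref{model:static_constr:traveltime}--\eqref{model:static_constr:traveltime_passengers} are untouched for every rider other than $l$, while for $l$ the choice in (iii) realizes exactly the shortest time $t_l$, which lies in the window $[t_l,\max\{\alpha t_l,\beta+t_l\}]$ since $\beta\ge 0$. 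Finally~\eqref{model:static_constr:calccapacity}--\eqref{model:static_constr:capcity} hold because deleting the pickup/dropoff pair of $l$ lowers the onboard occupancy by $q_l$ exactly where $l$'s passengers were aboard and leaves it unchanged elsewhere, so it remains within $[0,Q_v]$.

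A term-by-term comparison of~\eqref{model:static_obj} then closes the argument: every summand not involving node $l$ is identical in $S$ and $S'$; the penalty term increases by exactly $p_l$ as $z_l$ passes from $0$ to $1$; and rider $l$'s waiting contribution $\sum_{v'\in V}(u^{v'}_l - e_l)$ drops from a value exceeding $p_l$ in $S$ to $0$ in $S'$. Hence $S'$ has strictly smaller objective and $S$ cannot be optimal. I expect the feasibility verification of $S'$ to be the main obstacle — in particular, arguing that reusing the old arrival times on the shortcut route does not violate~\eqref{model:static_constr:vstart} or the deviation bounds~\eqref{model:static_constr:traveltime}--\eqref{model:static_constr:traveltime_passengers} — and the facts that make it go through are that the travel times form a metric and that removing a rider never increases any arrival time or any onboard occupancy.
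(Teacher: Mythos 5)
Your proposal is correct and follows essentially the same exchange argument as the paper: remove rider $l$'s pickup and dropoff from vehicle $v$'s route, set $z_l=1$, argue feasibility of the shortcut route via the triangle inequality and the reduced occupancy, and conclude via $u^v_l \ge u^v_0 + t_{0,l}$ that the saved waiting cost exceeds the penalty $p_l$. The only (harmless) difference is bookkeeping: you keep the surviving arrival times fixed and explicitly reset the time variables of the now-unserved rider, whereas the paper lets the arrival times $\hat{u}^v_i$ decrease and absorbs this into an inequality.
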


\begin{proof}
Suppose that there exists a feasible solution~{\it (I)} that serves a
passenger $l$ such that $u^{v}_0 + t_{0,l} - e_l > p_l$.  Let $r$ be
the route of vehicle $v$ (i.e., a sequence of edges in
$\mathcal{A}$). Removing the pickup and dropoff of rider~$l$ from
route~$r$ produces a new feasible route~$\hat{r}$ since the deviation
time cannot increase by the triangular inequality and the number of
riders in $v$ decreases.  Solution~{\it(II)} is derived from solution~{\it (I)}
by replacing the route~$r$ by route~$\hat{r}$ and fixing $z_l$ to
1. Using $\hat{u}$ and $\hat{z}$ to denote the variables of solution~{\it (II)}, the cost $C_{{\it (II)}}$ of solution~{\it (II)} is:
\begin{subequations}
\begin{align}
  C_{{\it (II)}} &= \sum_{i \in P\setminus\{l\}} \sum_{v \in V} (\hat{u}^v_i - e_i) +  \sum_{i \in P \setminus\{l\}} p_i \hat{z_i} + p_l \label{thm1:sol2}\\
    &< \sum_{i \in P\setminus\{l\}} \sum_{v \in V} (\hat{u}^v_i - e_i) +  \sum_{i \in P \setminus\{l\}} p_i \hat{z}_i + u^{v}_0 + t_{0,l} - e_l \label{thm1:hyp}\\
    &\leq \sum_{i \in P\setminus\{l\}} \sum_{v \in V} (u^v_i - e_i) +  \sum_{i \in P \setminus\{l\}} p_i \hat{z}_i + u^v_l - e_l \label{thm1:triang}\\
    &= \sum_{i \in P} \sum_{v \in V} (u^v_i - e_i) + \sum_{i  \in P} p_i z_i = C_{(I)} \label{thm1:sol1}
\end{align}
\end{subequations}
Equality~\eqref{thm1:sol2} is just the definition of the objective of
solution~{\it (II)}. Inequality~\eqref{thm1:hyp} is induced by the
hypothesis. Inequality~\eqref{thm1:triang} is induced by the
triangular inequality on the travel times. Inequality~\eqref{thm1:sol1}
just factors the equation to get the objective of
solution~{\it (I)}. Solution~{\it (I)} is thus suboptimal.
\end{proof}

\section{The Column-Generation Algorithm} \label{sec:algorithm}
\label{section-cg}

This section presents the column-generation algorithm, starting with
the master problem before presenting the pricing subproblem, and the
specifics of the column-generation process. Upon completion of the
column generation, \name{} solves a final MIP that imposes integrality
constraints on the master problem variables.

\paragraph{The Master Problem}

The restricted master problem, RMP, (presented in Figure~\ref{fig:master}) selects a
route for each vehicle.  In order for a route to be assigned to a
vehicle, the route must contain dropoffs for every current passenger
of that vehicle.  The set of routes is denoted by $R$ and its subset of
routes that can be assigned to vehicle $v$ is denoted $R_v$.  The
variables in the master problem are the following: $y_r \in [0,1]$
is set to 1 if potential route $r$ is selected for use and variable
$z_{i} \in [0,1]$ is set to 1 if request $i$ is not served by any of
the selected routes.  The constants are as follows: $c_r$ is the sum
of the wait time incurred by customers served by route $r$,
$p_{i}$ is the cost of not scheduling request $i$ for this period, and
$a_{i}^{r}=1$ if request $i$ is served by route $r$.  The objective
minimizes the waiting times incurred by all customers on each route
and the penalties for the customers not scheduled during the current
period.  Constraints~\eqref{model:master_constr:allserved} ensure that
$z_{i}$ is set to 1 if request $i$ is not served by any of the
selected routes and constraints~\eqref{model:master_constr:oneroute}
ensure that only one route is selected per vehicle.  The dual
variables associated with each constraint are specified in between parentheses
next to the constraint in the model.

\begin{figure}[!t]
\begin{subequations} \label{model:master}
\begin{align}
\min 	&  \quad \sum_{r \in R}  c_r y_r + \sum_{i \in P} p_{i} z_{i} \label{model:master_obj}\\
\mbox{subject to} \\
	&  \left ( \sum_{r \in R} y_r a_{i}^{r} \right ) + z_{i} = 1 & \forall i \in P && (\pi_i) \label{model:master_constr:allserved} \\ 
	&  \sum_{r \in R_v} y_r = 1 & \forall v \in V && (\sigma_v) \label{model:master_constr:oneroute} \\
	&  z_{i} \in \mathbb{N} & \forall i \in P \label{model:master_constr:domainz}\\
	&  y_r \in \{0,1\} & \forall r \in R \label{model:master_constr:domainy}
\end{align}
\end{subequations}
\caption{The Master Problem Formulation.} \label{fig:master}
\end{figure}

\paragraph{The Pricing Problem}

The routes for each vehicle~$v$ are generated via a pricing problem
depicted in Figure~\ref{fig:pricing}. The pricing
problem~\eqref{model:sub} is defined for a given
vehicle~$v$. Theorem~\ref{thm:1} makes it possible to remove some
passengers from the set~$P$ to obtain the subset~$P_v$ and thus a new
graph~$\mathcal{G}_v = (\mathcal{N}_v, \mathcal{A}_v)$. The 
pricing problem minimizes the reduced cost of the route
being generated.  Constraints \eqref{model:sub_constr:flow} --
\eqref{model:sub_constr:domain} correspond to constraints
\eqref{model:static_constr:flow} -- \eqref{model:static_constr:domain}
in the static problem.

\begin{figure}[!t]
\begin{subequations} \label{model:sub}
    \begin{align}
\min	&  \quad \sum_{i \in P_v} (u_i - e_i) - \sum_{i \in P_v}\sum_{j \in \mathcal{N}_v} x_{ij} \pi_{i} - \sigma_v \label{model:sub_obj}\\ 
\intertext{subject to} 
    & \sum_{j \in \mathcal{N}_v} x_{ij} = \sum_{j \in \mathcal{N}_v} x_{ji} & \forall i \in \mathcal{N}_v\setminus\{0,s\} \label{model:sub_constr:flow} \\
    & \sum_{j \in \mathcal{N}_v} x_{0j} = 1  \label{model:sub_constr:source} \\
	& \sum_{j \in \mathcal{N}_v} x_{js} = 1  \label{model:sub_constr:sink} \\
	& \sum_{j \in \mathcal{N}_v} x_{ij} - \sum_{j \in \mathcal{N}_v}x_{n+i, j} = 0 & \forall i \in P_v \label{model:sub_constr:droppick} \\
	& \sum_{i \in \mathcal{N}_v} x_{ij} = 1 & \forall j \in I_v \label{model:sub_constr:dropoffs} \\
    & u_j \geq (u_i + \Delta_i + t_{ij})x_{ij} & \forall i,j \in \mathcal{N}_v \label{model:sub_constr:vstart} \\
    & u_0 \geq T^B_v \label{model:sub_constr:voperation1} \\
    & u_s \leq T^E_v \label{model:sub_constr:voperation2} \\
    & u_i \geq e_i & \forall i \in P_v \label{model:sub_constr:bounds} \\
    & t_i \leq u_{n+i} - (u_i + \Delta_i) \leq \max \{ \alpha t_i, \beta + t_i \}& \forall i \in P_v \label{model:sub_constr:traveltime} \\
    & t_{i} \leq u_i - (u^P_i + \Delta_i) \leq \max\{ \alpha t_{i}, \beta + t_{i}\} & \forall i \in I_v \label{model:sub_constr:traveltime_passengers} \\
    & w_j \geq (w_i + q_j) x_{ij} & \forall i,j \in \mathcal{N}_v \label{model:sub_constr:calccapacity} \\
    & 0 \leq w_i \leq Q_v & \forall i \in \mathcal{N}_v \label{model:sub_constr:capcity} \\
    & x_{ij} \in \{0, 1\} & \forall i,j \in \mathcal{N}_v \label{model:sub_constr:domain}
\end{align}
\end{subequations}
\caption{The Pricing Problem Formulation for Vehicle~$v$.} \label{fig:pricing}
\end{figure}

\paragraph{The Column Generation}

\newcommand{\tpmod}[1]{{\@displayfalse\pmod{#1}}}
\let\oldnl\nl
\newcommand{\nonl}{\renewcommand{\nl}{\let\nl\oldnl}}
\def\funname{\textsc{Release}}

\begin{algorithm}[!t]
    \caption{\textsc{ColumnGeneration}}
    \label{alg:cg}
    \setcounter{AlgoLine}{0}

    \DontPrintSemicolon
    \While{true} {
      $\mathcal{C} \gets ${\sc GenerateColumns}()\;
      \If{$\mathcal{C} = \emptyset$} {
        break; \;
      }
      Solve RMP after adding $\mathcal{C}$ \;
    }
    \SetKwFunction{FMain}{{\sc GenerateColumns}}
    \SetKwFunction{FSUB}{{\sc GenerateSizedColumms}}
    \SetKwProg{Fn}{Function}{:}{}
    \label{a1:fundecl}
    \nonl \Fn{\FMain{}}
          {
             $k \gets 1$ \;
             \While{$k \leq |P|$} {
                $\mathcal{C} \gets $ {\sc GenerateSizeColumns}($k$)\;
                \If{$\mathcal{C} \neq \emptyset$} {
                   \Return $\mathcal{C}$ \;
                }
                \Else {
                  $k$++ \;
                }
             }
          }
    \nonl \Fn{\FSUB{k}}
          {
             $Q \gets \{ R \subseteq P \mid |R| = k \}$ \;
             \Forall{$v \in |V|$ ordered by decreasing $\sigma_v$} {
               $R_v \gets \argmin_{R \subset Q}$ {\sc pricing}($v,R$)\;
                \If{$\mbox{{\sc pricing}}(v,R_v) \} < 0$} {
                   $Q \gets \{ R \subseteq Q \mid R \cap R_v = \emptyset \}$ \;
                }
             }
             \Return $\{ \mbox{\sc route}(v,R_v) | v \in V ~\&~ \mbox{{\sc pricing}}(v,R_v) < 0 \} $ \;
          }          
\end{algorithm}

In traditional column generation for dial-a-ride problems, the pricing
problem is formulated as a resource-constrained shortest-path problem
and solved using dynamic programming. However, the minimization of
waiting times, i.e., $\sum_{i \in P} (u_i - e_i)$, is particularly
challenging, as it cannot be formulated as a classical
resource-constrained shortest-path problem. One option is to
discretize time and use time-expanded graphs. However, this raises
significant computational challenges for large instances. As a result,
this paper solves the pricing problem through an anytime algorithm
that takes into account the real-time constraints \name{} operates
under.

The column-generation algorithm is specified in Algorithm
\ref{alg:cg}: {\em It generates multiple columns with disjoint sets of
  customers}. In the algorithm, function {\sc Pricing}($v,R$) solves
the pricing problem for a vehicle $v$ and a set $R$ of requests, while
{\sc Route}($v,R$) returns the optimal route for a vehicle $v$ and a
set of request $R$. Lines 1--5 is the high-level column-generation
procedure: It alternates the generation of columns and the solving of
the master problem with the generated columns until no more columns
can be generated. It proceeds in waves, first generating columns with
one customer before progressively increasing the number of considered
requests.  Procedure {\sc GenerateColumn} (lines 6--12) generates
columns by increasing number of requests. Procedure {\sc
GenerateSizedColumn} (lines 13--18) generates columns of size $k$,
where $k$ is the number of requests in the column.  It first computes
$Q$, a set in which each element is a $k$-sized set of possible
requests.  It then considers the various vehicles ranked in decreasing
order of their dual values $\sigma_v$. Line~15 computes the sets of
requests with the smallest pricing objective value. If the pricing
objective is negative (line~16), all set of requests which contains a
request covered by $R_v$ are removed from $Q$ to ensure that \name{}
generates a set of non-overlapping columns at each iteration
(line~17). Finally, line 18 returns the routes for each vehicle with
negative reduced costs.
\section{The Real-Time Problem}
\label{section-rt}

\name{} divides the time horizon into epochs of length $\ell$, i.e.,
$[0,\ell),[\ell,2\ell),[2\ell,3\ell),\ldots$ and epoct $\tau$
corresponds to the time interval $[\tau \ell , (\tau+1) \ell)$.
During period $\tau$, \name{} batches the incoming requests into a set
$P_\tau$, which is considered in the next epoch. It also optimizes the
static problem using the requests accummulated in $P_{\tau-1}$ and
those requests not yet committed to in the epochs $\tau-1$ and
before. The optimization is performed over the interval
$[(\tau+1)\ell,\infty)$.

It remains to specify how to compute the inputs to the optimization
problem, i.e., the departing stops and times for each vehicle and the
various set of requests to serve. To determine the starting stop for a
vehicle $v$, the optimization in epoch $\tau$ uses the solution
$\phi_{\tau-1}$ to the static problem in epoch $\tau-1$ and considers
the first stop $s_v$ in $\phi_{\tau-1}$ in the interval
$[(\tau+1)\ell,\infty)$ if it exists. This stop becomes the starting
  stop $u_0^v$ of the vehicle and its earliest time is given by the
  earliest departure time of vehicle $v$ in $\phi_{\tau-1}$. If
  vehicle $v$ is idle at stop $s_v$ in $\phi_{\tau-1}$ and not
  scheduled on $[(\tau+1)\ell,\infty)$, then the departing stop is
    $s_v$ and the earlierst departing time is $(\tau+1) \ell$.
    Consider now the sets $P$, $D$, and $I_v$ ($v \in V$) for period
    $\tau$.  For a vehicle $v$, all the requests before its departing
    stop $s_v$ are said to be {\em committed} and are not
    reconsidered.  The set $I_v$ are the dropoffs of the requests that
    have been picked up before $s_v$ but not yet dropped off. The set
    $P$ corresponds to the requests that have not been picked up by
    any vehicle $v$ before $s_v$, as well as the requests batched in
    $P_{\tau-1}$. The set $D$ simply contains the dropoffs associated
    with $P$.

Finally, since the static problem may not schedule all the requests,
it is important to update the penalty of unserved requests to ensure
that they will not be delayed too long. The penalty for an unserved
request $c$ in period $\tau$ is given by $ p_c = \delta 2^{(\tau \ell
  - e_c) / (10\ell)} \label{equation:obj} $ and it increases
exponentially over time as shown in Figure \ref{fig:obj_func}. The
$\delta$ parameter incentivizes the schedule of the request in its
first available period. Figure \ref{fig:obj_func} displays the
function for $\delta = 420$ seconds and $\ell = 30$ seconds: It
ensures that the penalty doubles every ten periods (in the example,
every five minutes).

\begin{figure}[!t]
    \centering
    \includegraphics[scale=0.4]{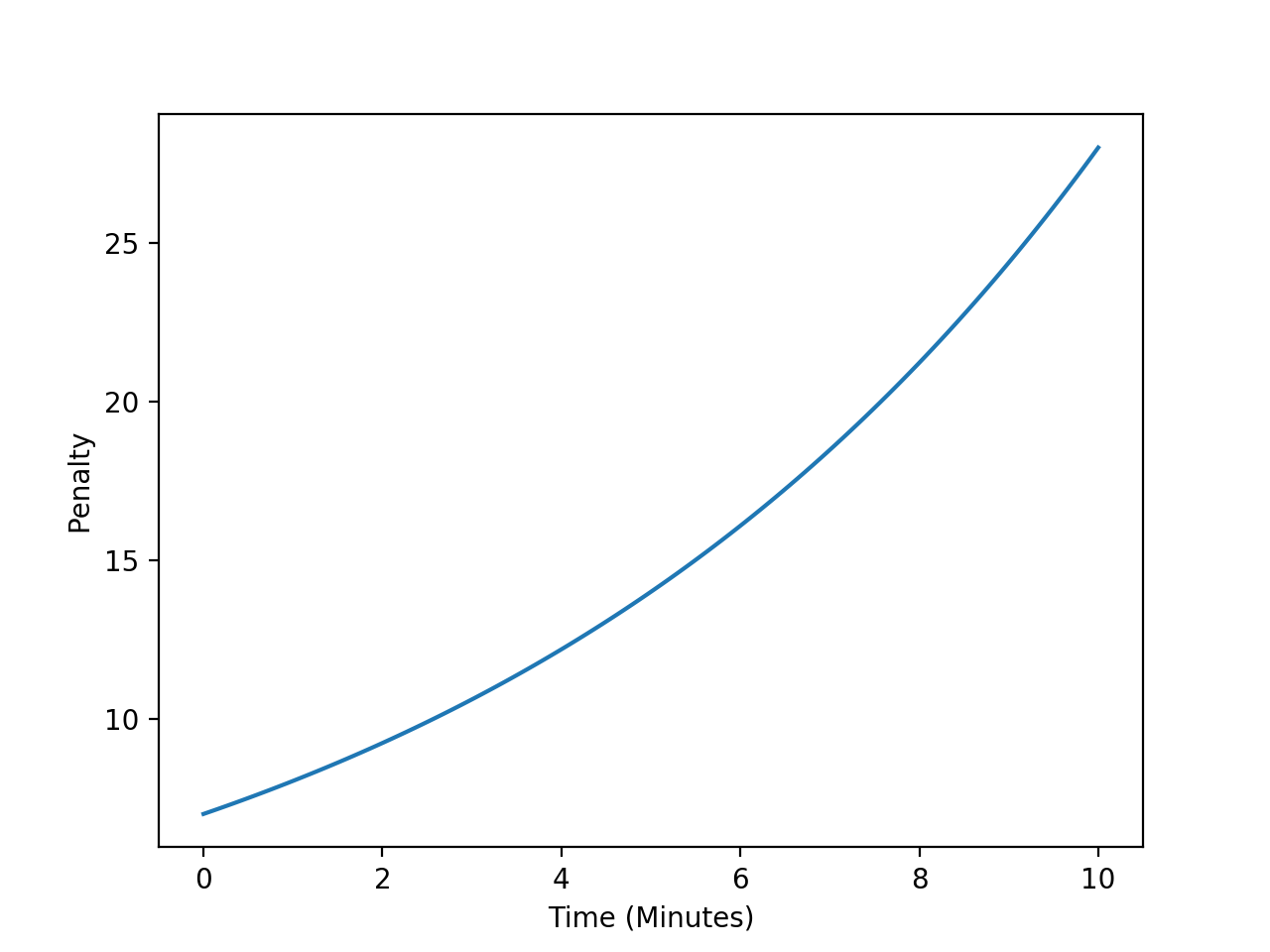}
    \caption{The Penalty Function for Unserved Customers.}
    \label{fig:obj_func}
\end{figure}

Observe that the static model schedules all the requests which have
not been committed to any vehicle. This gives a lot of flexibility to
the real-time system at the cost of more complex pricing subproblems.

\section{Experimental Results}
\label{section-results}
\paragraph{Instance Description}

\name{} was evaluated on the yellow trip data provided by the New York
City Taxi and Limousine Commission \cite{nycdata}.  This data provides
{\em pickup and dropoff locations}, which were used to match trips to
the closest virtual stops, {\em starting times}, which were used as
the request time, and the {\em number of passengers}.  This section
reports results on a representative set of 24 instances, 1 hour per
day for two weekdays per month from July 2015 through June 2016.  To
capture the true difficulty of the problem, rush hours (7--8am) were
selected.  The instances have an average of 21,326 customers and range
from 6,678 customers to 28,484 customers.  Individual requests with
more customers than the capacity of the vehicles were split into
several trips. An additional test was performed on the largest
instance with 32,869 customers.

\paragraph{Virtual Stops}

The evaluation assumes a dial-a-ride system using the concept of
virtual stops proposed in the {\sc RITMO} system \cite{RITMO} (Uber
and Lyft are now considering similar concepts). Virtual stops are
locations where vehicles can pick up and drop off customers without
impeding traffic. They also ensure that customers are ready to pick up
and make ride-sharing more efficient since they decrease the number of
stops. To implement virtual stops, Manhattan was overlayed with a grid
with cells of 200 squared meters and every cell had a virtual
stop. The trip times were precomputed by querying OpenStreetMap for
travel times between each virtual stop \cite{OpenStreetMap}.  All
customers at a virtual stop are grouped and can be picked up together.

\paragraph{Algorithmic Setting}

Both the final master problem and the restricted master problem are
solved using Gurobi 8.1. Empty vehicles are initially evenly
distributed over the virtual stops.  The pricing problem uses parallel
computing to implement line 15 of Algorithm \ref{alg:cg}, exploring
potential requests simultaneously. To meet real-time constraints, the
implementation greedily extends the ``optimal'' routes of size $k$ to
obtain routes of size $k+1$. Unless otherwise specified, all
experiments are performed with the following default parameters: 2,000
vehicles of capacity 5, $\alpha = 1.5$, $\beta = 240$ seconds, and
$\delta = 420$ seconds. The impact of these parameters is also
studied.

\paragraph{Wait Times}

\begin{figure}[!t]
    \centering
    \begin{minipage}{.45\textwidth}
        \centering
        \includegraphics[width=.9\linewidth]{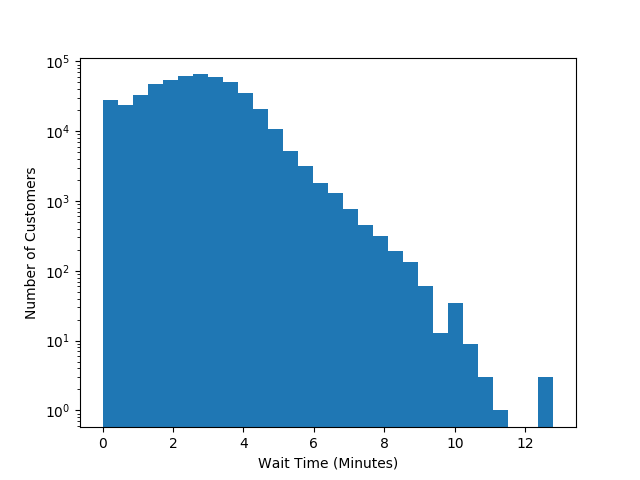}
        \captionof{figure}{The Histogram of Wait Times (Log Scale).}
        \label{fig:wait_hist}
    \end{minipage}%
    \hfill
    \begin{minipage}{.45\textwidth}
        \centering
        \includegraphics[width=.9\linewidth]{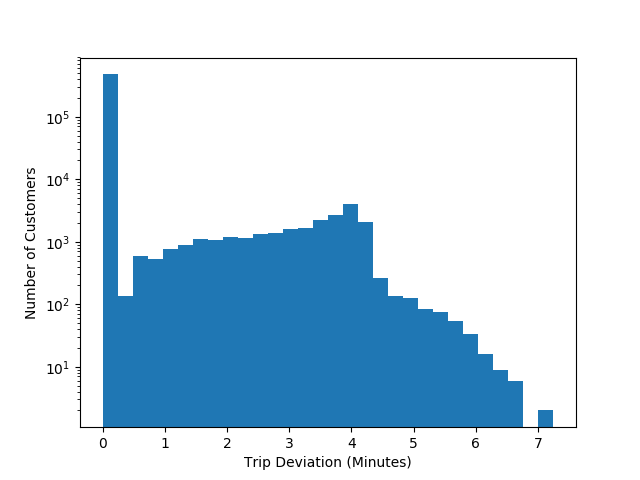}
        \captionof{figure}{The Histogram of Trip Deviations (Log Scale).}
        \label{fig:trip_hist}
    \end{minipage}
\end{figure}

Figure~\ref{fig:wait_hist} reports the distribution of the waiting for
all customers across all instances. The results demonstrate the
performance of \name{}: The average waiting time is
about 2.58 minutes with a standard deviation of 1.31.  On the instance
with 32,869 customers, the average waiting time is 5.42 minutes.

\paragraph{Trip Deviation}

Figure~\ref{fig:trip_hist} depicts a histogram of trip deviations
incurred because of ride-sharing. The results indicate that riders
have an average trip deviation of 0.34 minutes with a standard
deviation of 0.74. In percentage, this represents a deviation of about
12\%. On the instance with 32,869 customers, the average trip deviation
is 2.23 minutes, which shows the small overhead induced by
ride-sharing.

\paragraph{The Impact of the Fleet Size}

Figure~\ref{fig:fleetsize} studies the impact of the fleet size on the
waiting times and trip deviation. The plot reports the average waiting
times for various numbers of riders, where capacity is 4, $\alpha =
1$, $\beta = 840$ seconds, and $\delta = 420$ seconds to facilitate 
comparisons to \cite{Alonso-Mora462}.  The results
show that, even with 1,500 vehicles, the average waiting time remains
below 6 minutes and the average deviation time below 40 seconds. Since
\name{} is guaranteed to serve all the requests, these results
demonstrate the potential of column generation and ride-sharing for
large-scale real-time dial-a-ride platforms. Adopting \name{} has the
potential to substantially reduce traffic in large cities, while still
guaranteeing service within reasonable times. Recall that the approach
in \cite{Alonso-Mora462} does not serve about 2\% of the requests.

\begin{figure}[!t]
    \centering
    \subfloat[The Impact on the Average Wait Times.]{{\includegraphics[width=5cm]{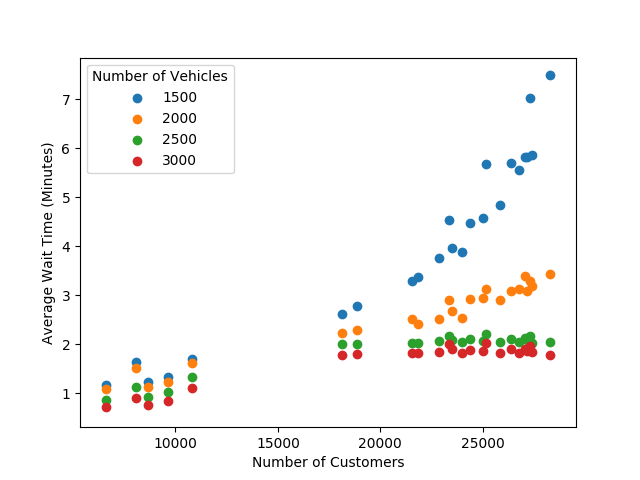} }}
    \qquad
    \subfloat[The Impact on the Average Trip Deviations.]{{\includegraphics[width=5cm]{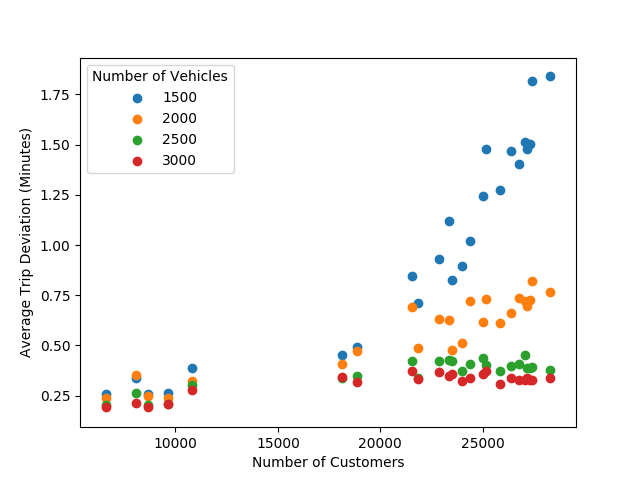} }}
    \caption{The Impact of the Fleet Size on the Average Wait Times and Average Deviations on All Instances.}
    \label{fig:fleetsize}%
\end{figure}

\paragraph{The Impact of Vehicle Capacity}

Figure~\ref{fig:capacity} studies the impact of the vehicle capacity
(i.e., how many passengers a vehicle can carry) on the average waiting
times and trip deviation. The parameters are set to 2,000 vehicles, 
$\alpha = 1$, $\beta = 840$ seconds, and $\delta = 420$ seconds
to facilitate comparisons to \cite{Alonso-Mora462}. 
The results on waiting times show that moving to
vehicles of capacity 8 further reduces the average waiting times,
especially on the large instances. On the other hand, moving from a
capacity 5 to 3 does not affect the results too much. The results on
deviations are more difficult to interpret. Obviously moving to a
capacity 8 further increases the deviation (although it remains below
one minute). However, moving to vehicles of capacity 3 also increases
the deviation, which is not intuitive. This may be a consequence of 
myopic decisions that cannot be corrected easily given the tight
capacity. 

\begin{figure}[!t]
    \centering
    \subfloat[The Impact on Wait Times.]{{\includegraphics[width=5cm]{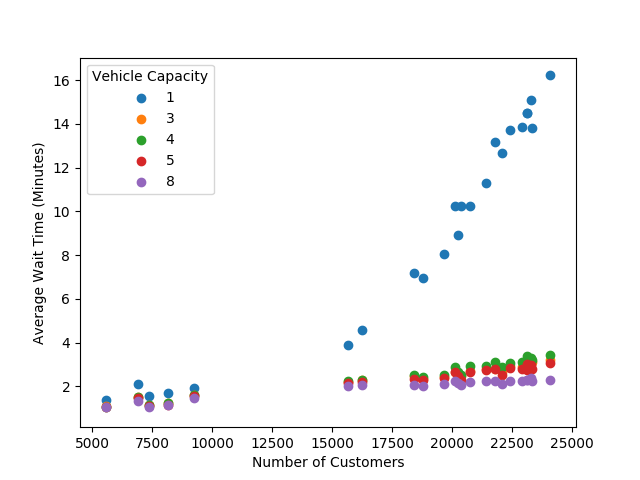}}}
    \qquad
    \subfloat[The Impact on Trip Deviations.]{{\includegraphics[width=5cm]{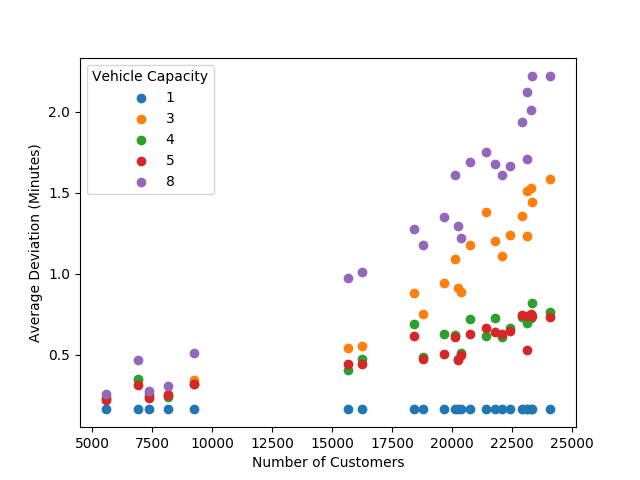}}}
    \caption{The Impact of the Vehicle Capacity on the Average Wait Times and the Average Trip Deviations on All Instances.}%
    \label{fig:capacity}%
\end{figure}

\paragraph{The Impact of the Penalty}

The penalty $p_i$ in the model is an exponential function of the
current waiting time of customer $i$. Constant $\delta$ controls the
initial penalty: If it is too small, the penalty for not scheduling a
request for the first few periods is low, which causes an increase in
wait times, as can be observed in Figure~\ref{fig:obj_vs_wait}. Once
$\delta$ is large enough, the average wait times converge to the same
values.

\begin{figure}[!t]
    \centering
    \begin{minipage}{.45\textwidth}
        \centering
        \includegraphics[width=.9\linewidth]{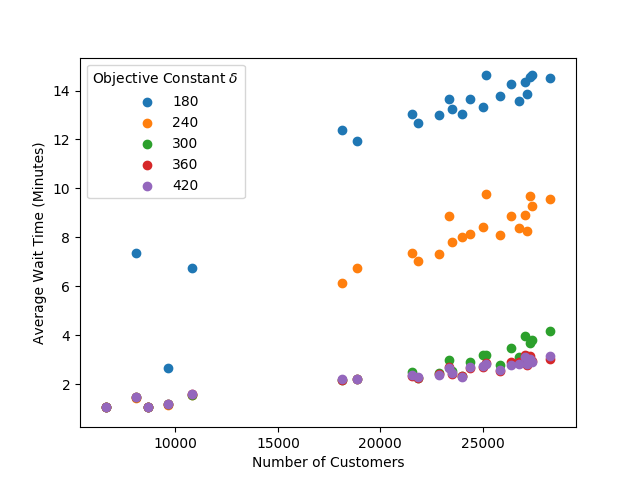}
        \captionof{figure}{The Impact of the Penalty on Average Wait Times.}
        \label{fig:obj_vs_wait}
    \end{minipage}%
    \hfill
    \begin{minipage}{.45\textwidth}
        \centering
        \includegraphics[width=.9\linewidth]{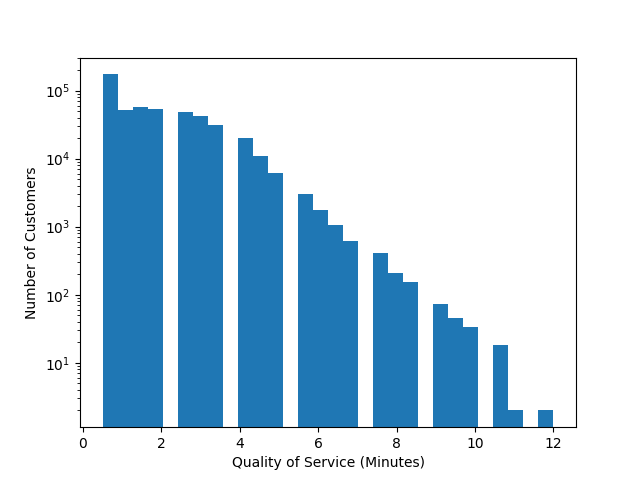}
        \captionof{figure}{Times Until Final Vehicle Assignments.}
        \label{fig:qos_hist}
    \end{minipage}%
\end{figure}

\paragraph{Final Vehicle Assignments}

As a result of re-optimization, the vehicle to which a rider is
assigned can change.  Figure~\ref{fig:qos_hist} reports the amount of
time until riders receive their final vehicle assignment (the vehicle
which actually picks them up).  Not surprisingly, this histogram
closely follows the waiting time distribution.  The majority of riders
receive this assignment quickly.  However, it takes some riders over
10 minutes to receive their final vehicle assignment, which shows that
\name{} takes advantage of the ability to re-assign riders
to vehicles which will result in better overall assignments.

\begin{figure}[!t]
  \centering
    \begin{minipage}{.45\textwidth}
        \centering
        \includegraphics[width=.9\linewidth]{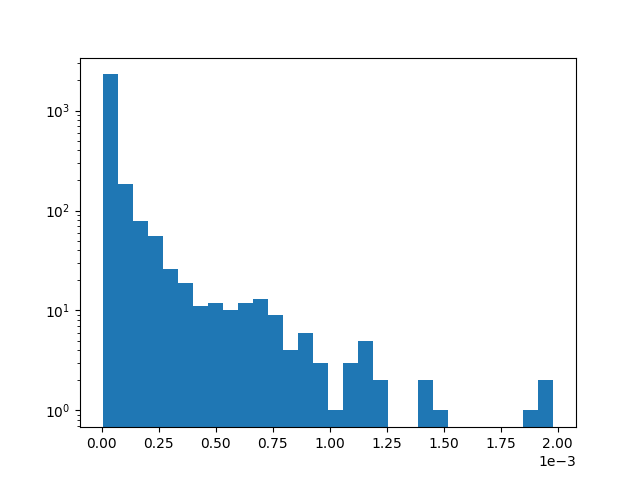}
        \captionof{figure}{The Number of Generated Columns as a Percentage of Possible Combinations of Requests/Vehicles. The x-Axis value are scaled by $10^{-3}$.}
        \label{fig:cg_hist}
    \end{minipage}
  \hfill
        \begin{minipage}{.45\textwidth}
        \centering
        \includegraphics[width=.9\linewidth]{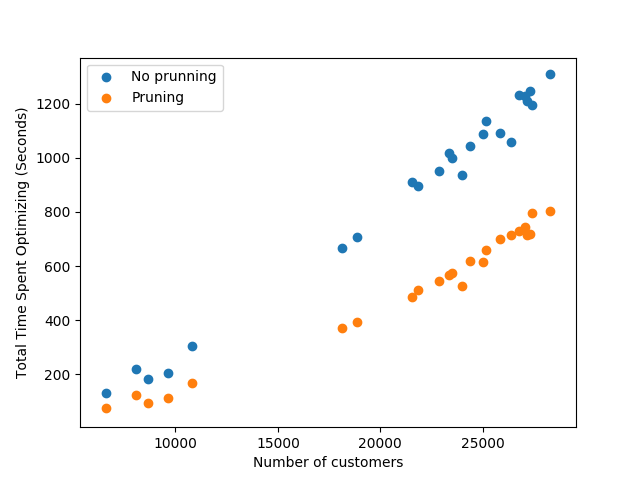}
        \captionof{figure}{Optimization Times With and Without Pruning.}
        \label{fig:pruning_plot}
    \end{minipage}
\end{figure}

\paragraph{The Impact of Column Generation}

Figure \ref{fig:cg_hist} depicts the impact of column generation and
reports the number of columns in the final MIP as all possible columns
of sizes 1 and 2 to be conservative. The results show that the
algorithm only explores a small percentage of all potential columns,
demonstrating the benefits of a column-generation approach.

\paragraph{The Impact of Pruning}

Figure~\ref{fig:pruning_plot} shows the impact of Theorem 1, which
provides a way to prune the number of requests considered at each step
of the algorithm. The figures report the total optimization time for
all time periods of each instance. Each optimization must be performed
in less than 30 seconds, but the graph reports the total optimization
time over the entire hour. As the results indicate, the pruning
benefits become substantial as the instance sizes grow. The results
show that the pruning significantly reduces the computational
time. They also show that \name{} should be able to handle even
larger instances since, after exploiting Theorem 1, \name{} uses
only about a sixth of the available time. This creates opportunities
to exploit stochastic information.

\begin{figure}[!t]
    \centering
    \subfloat[The Impact on Average Vehicle Utilization.]{{\includegraphics[width=5cm]{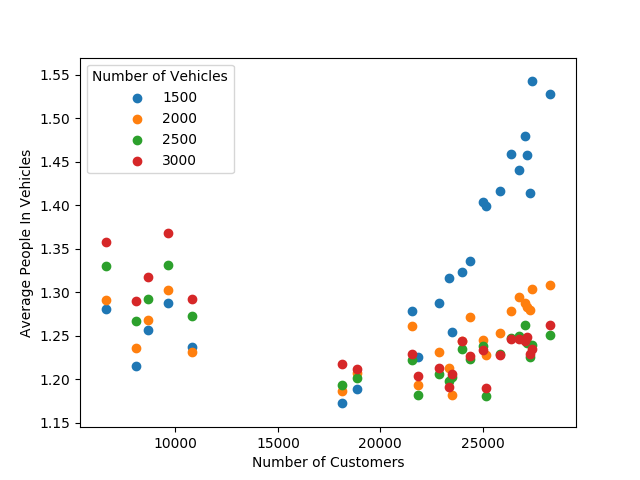} }}
    \qquad
    \subfloat[The Impact on Average Vehicle Idle Time.]{{\includegraphics[width=5cm]{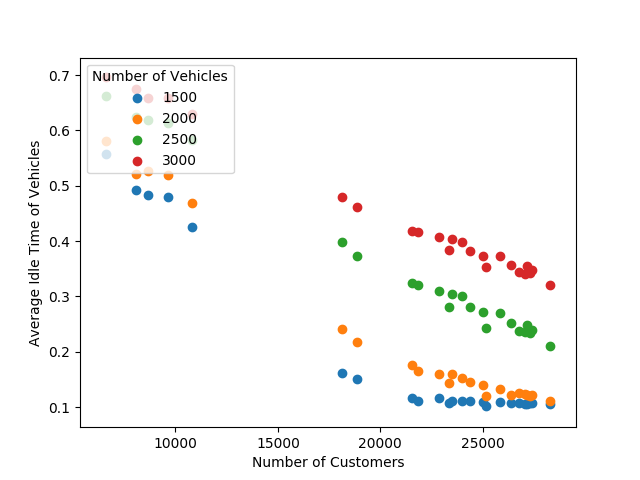} }}
    \caption{The Impact of the Fleet Size on the Average Vehicle Utilization and Idle Time on All Instances.}
    \label{fig:vehicle_util}%
\end{figure}

\paragraph{The Impact of Ride Sharing}

Figure~\ref{fig:vehicle_util} reports the average number of people in
each vehicle at all times for each instance.  The results show a
significant amount of ride sharing, although single trips and idle
time remain a significant portion of the rides, especially when the
fleet is oversized.  Lastly, Figure~\ref{fig:capacity} shows that wait
times are reduced by a factor of 4 when moving from single-rider trips
to ride-sharing for large instances while the trip deviation only
increases to at most 2 minutes for vehicles of capacity 8, thus
demonstrating the value of ride sharing.

\paragraph{Comparison with Prior Work}

The results of \cite{Alonso-Mora462} ``show that 2,000 vehicles (15\%
of the taxi fleet) of capacity 10 or 3,000 of capacity 4 can serve
98\% of the demand within a mean waiting time of 2.8 min and mean trip
delay of 3.5 min.''  \name{} relaxes the hard time-windows present in
\cite{Alonso-Mora462} and improves on these results, yielding an
average wait time of 2.2 minutes with only 2,000 vehicles, while
guaranteeing service for all riders.

\section{Conclusion}
\label{section-conclusion}

This paper considered the real-time dispatching of large-scale
ride-sharing services over a rolling horizon. It presented \name{}, a
real-time optimization framework that divides the time horizon into
epochs and uses a column-generation algorithm that minimizes wait
times while guaranteeing services for every rider and a small trip
deviation compared to a direct trip. This contrasts to earlier work
which rejected customers when the predicted waiting time was
considered too long (e.g., 7 minutes). This assumption reduced the
search space at the cost of rejecting a significant number of
requests.

The column-generation algorithm of \name{} is derived from a
three-index formulation \cite{Cordeau2007} which is adapted for use in
real-time dial-a-ride applications. In addition, to ensure that all
riders are served in reasonable times, the paper proposed an
optimization model that balances the minimization of waiting times
with penalties for riders that are not scheduled yet. These penalties
are increased after each epoch to make it increasingly harder not to
serve waiting riders. The paper also presented a key property of the
formulation that makes it possible to reduce the search space
significantly.

\name{} was evaluated on historic taxi trips from the New York City
Taxi and Limousine Commission \cite{nycdata}, which contains
large-scale instances with more than 30,000 requests an hour. The
results indicated that \name{} enables a real-time dial-a-ride service
to provide service guarantees (every rider is served in reasonable
time) while improving average waiting times and average trip
deviations compared to prior work.  The results also showed that
larger occupancy vehicles bring benefits and that the fleet size can
be further reduced while preserving very reasonable waiting times.

Substantial work remains to be done to understand the strengths and
limitations of the approach. The current implementation is myopic and
heavily driven by the dual costs to generate the columns. Different
pricing implementation, including the use of constraint programming to
replace our dedicated search algorithm, and the inclusion of
stochastic information are natural directions for future research.

\section*{Acknowledgments}

This research was partly supported by Didi Chuxing Technology Co. and
Department of Energy Research Grant 7F-30154. We would like to thank
the reviewers for their detailed comments and suggestions which
dramatically improved the paper, and the program chairs for a rebuttal
period that was long enough to run many experiments.

\bibliographystyle{acm}
\bibliography{references}

\begin{thebibliography}{10}

\bibitem{Alonso-Mora462}
{\sc Alonso-Mora, J., Samaranayake, S., Wallar, A., Frazzoli, E., and Rus, D.}
\newblock On-demand high-capacity ride-sharing via dynamic trip-vehicle
  assignment.
\newblock {\em Proceedings of the National Academy of Sciences 114}, 3 (2017),
  462--467.

\bibitem{Bent2007}
{\sc Bent, R., and Van~Hentenryck, P.}
\newblock Waiting and relocation strategies in online stochastic vehicle
  routing.
\newblock In {\em Proceedings of the 20th International Joint Conference on
  Artifical Intelligence\/} (San Francisco, CA, USA, 2007), IJCAI'07, Morgan
  Kaufmann Publishers Inc., pp.~1816--1821.

\bibitem{scenariopvh}
{\sc Bent, R.~W., and Van~Hentenryck, P.}
\newblock Scenario-based planning for partially dynamic vehicle routing with
  stochastic customers.
\newblock {\em Operations Research 52}, 6 (2004), 977--987.

\bibitem{Berbeglia2012}
{\sc Berbeglia, G., Cordeau, J.-F., and Laporte, G.}
\newblock A hybrid tabu search and constraint programming algorithm for the
  dynamic dial-a-ride problem.
\newblock {\em INFORMS Journal on Computing 24}, 3 (2012), 343--355.

\bibitem{Bertsimas2018OnlineVR}
{\sc Bertsimas, D., Jaillet, P., and Martin, S.}
\newblock Online vehicle routing : The edge of optimization in large-scale
  applications.

\bibitem{Cordeau2007}
{\sc Cordeau, J.-F., and Laporte, G.}
\newblock The dial-a-ride problem: models and algorithms.
\newblock {\em Annals of Operations Research 153}, 1 (Sep 2007), 29--46.

\bibitem{Jain2011}
{\sc Jain, S., and Van~Hentenryck, P.}
\newblock Large neighborhood search for dial-a-ride problems.
\newblock In {\em International Conference on Principles and Practice of
  Constraint Programming\/} (2011), Springer, pp.~400--413.

\bibitem{nycdata}
{\sc NYC}.
\newblock Nyc taxi \& limousine commission - trip record data.

\bibitem{OpenStreetMap}
{\sc {OpenStreetMap contributors}}.
\newblock Planet dump retrieved from https://planet.osm.org, 2017.

\bibitem{scalable-taxi}
{\sc Ota, M., Vo, H., Silva, C., and Freire, J.}
\newblock A scalable approach for data-driven taxi ride-sharing simulation.
\newblock In {\em 2015 IEEE International Conference on Big Data (Big Data)\/}
  (Oct 2015), pp.~888--897.

\bibitem{stars}
{\sc Ota, M., Vo, H., Silva, C., and Freire, J.}
\newblock Stars: Simulating taxi ride sharing at scale.
\newblock {\em IEEE Transactions on Big Data 3}, 3 (Sept 2017), 349--361.

\bibitem{RITMO}
{\sc www.secondwavemedia.com/concentrate/innovationnews/ritmorollout0443.aspx}.
\newblock {RITMO app introduces on-demand mass transit at U-M, with plans to
  expand}.
\newblock Concentrate, 2018.

\end{thebibliography}
\end{document}